\renewcommand\thesubsection{\@arabic\c@subsection}
\renewcommand\pmod[1]{\;(\operatorname{mod}#1)}
\newtheorem{theorem}{Theorem}%[section]
\theoremstyle{remark}
\begin{document}

\title{A non-ordinary (prime) note}

\date{31 August 2024}

\author{Wadim Zudilin}
\address{Department of Mathematics, IMAPP, Radboud University, PO Box 9010, 6500~GL Nijmegen, Netherlands}
\email{w.zudilin@math.ru.nl}

\subjclass[2020]{Primary 11F33; Secondary 11F30, 11P83, 33C20}

\begin{abstract}
Given a newform with the Fourier expansion $\sum_{n=1}^\infty b(n)q^n\in\mathbb Z[[q]]$, a prime $p$ is said to be non-ordinary if $p\mid b(p)$.
We exemplify several newforms of weight 4 for which the latter divisibility implies a stronger divisibility\,---\,a property that may be thought unlikely to happen too often.
\end{abstract}

\maketitle
%==================================================

For a normalised cusp eigenform (\emph{aka} newform) $f(\tau)=\sum_{n=1}^\infty b(n)q^n\in\mathbb Z[[q]]$ with $q=e^{2\pi i\tau}$, consider the question
of nonvanishing $b(p)$ modulo~$p$. The primes for which such nonvanishing takes place are known as \emph{ordinary primes} (for the form $f(\tau)$); ones for which $b(p)\equiv0\pmod p$ are non-ordinary. It is widely accepted (see \cite{Go97} for the level~1 case) that the (Dirichlet) density of ordinary primes is 1 for non-CM newforms, though already the problem of showing that there are infinitely many of them remains open for any concrete such newform of weight greater than~$3$.
Slightly more can be said in the case when a newform $f(\tau)$ is CM\,---\,see~\cite{LM14}.

In this note we focus on weight~$4$ and very particular choices of eigenforms but we do not pretend to demonstrate the infinitude of ordinary primes.
We rather explain that the non-ordinary primes imply a significantly stronger divisibility property than just $p\mid b(p)$, thus giving a heuristical argument why they are unlikely to show up `too often'.

In what follows $(a)_k=\Gamma(a+k)/\Gamma(a)=\prod_{j=0}^{k-1}(a+j)$ denotes the Pochhammer symbol.

\begin{theorem}
\label{th1}
A prime $p>2$ is non-ordinary for the newform
\begin{equation}
\eta(2\tau)^4\eta(4\tau)^4=q\prod_{m=1}^\infty(1-q^{2m})^4(1-q^{4m})^4
=\sum_{n=1}^\infty b(n)q^n
\label{e01}
\end{equation}
if and only if the degree $4(p-1)$ polynomial
\[
Q_p(a)=2^{4(p-1)}(a+1)_{p-1}^4\cdot\sum_{k=0}^{p-1}\frac{(a+\frac12)_k^4}{(a+1)_k^4}\in\mathbb Z[a]
\]
has \emph{all} its coefficients divisible by~$p$.
\end{theorem}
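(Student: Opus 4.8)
The plan is to turn the statement into a single polynomial congruence in $\mathbb F_p[a]$ and then to pin that congruence down by its value at $a=0$. First I would clear the half-integers and write $Q_p$ as the genuine polynomial
\[
Q_p(a)=2^{4(p-1)}\sum_{k=0}^{p-1}\bigl(a+\tfrac12\bigr)_k^{4}\,(a+k+1)_{p-1-k}^{4},
\]
using $(a+1)_{p-1}=(a+1)_k\,(a+k+1)_{p-1-k}$. Reducing modulo $p$ I would record the elementary simplifications $2^{4(p-1)}\equiv1$ (Fermat), $(a+1)_{p-1}\equiv a^{p-1}-1$ (the product over $a+1,\dots,a+p-1$ is $\prod_{c\neq0}(a-c)$), and $(p-1)!\equiv-1$ (Wilson). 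Since the condition ``all coefficients of $Q_p$ are divisible by $p$'' is exactly $Q_p\equiv0$ in $\mathbb F_p[a]$, the theorem becomes the equivalence $Q_p\equiv0$ in $\mathbb F_p[a]\iff b(p)\equiv0\pmod p$.

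One implication needs only the value at $a=0$. There
\[
Q_p(0)=2^{4(p-1)}\bigl((p-1)!\bigr)^{4}\sum_{k=0}^{p-1}\frac{(1/2)_k^{4}}{k!^{4}}\equiv\sum_{k=0}^{p-1}\frac{(1/2)_k^{4}}{k!^{4}}\pmod p,
\]
and the right-hand sum is $\equiv b(p)\pmod{p^3}$ by Kilbourn's supercongruence for precisely this eta quotient. Hence $Q_p\equiv0$ forces $Q_p(0)\equiv0$ and therefore $b(p)\equiv0$; this is the direction from divisibility to non-ordinariness.

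The converse is the heart of the matter, and I would obtain it from a parametric separation congruence asserting that, modulo $p$, the entire $a$-dependence of $Q_p$ collapses to one explicit nonzero factor. Concretely I expect
\[
Q_p(a)\equiv b(p)\,\mu_p(a)\pmod p,\qquad \mu_p(a)=\Bigl(\prod_{j=0}^{(p-3)/2}(2a+2j+1)\Bigr)^{4}=\Bigl(2^{(p-1)/2}\bigl(a+\tfrac12\bigr)_{(p-1)/2}\Bigr)^{4}.
\]
The polynomial $\mu_p$ is nonzero in $\mathbb F_p[a]$ (leading coefficient $2^{2(p-1)}\equiv1$, distinct roots), and the classical evaluation $(1/2)_{(p-1)/2}^{2}\equiv(-1)^{(p+1)/2}\pmod p$ gives $\mu_p(0)\equiv1$, consistent with $Q_p(0)\equiv b(p)$. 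Granting the displayed congruence, $Q_p\equiv0$ in $\mathbb F_p[a]$ holds exactly when $b(p)\equiv0$, which closes the equivalence. To produce it I would first try to fold the sum by the involution $k\mapsto p-1-k$ on the summand $(a+\tfrac12)_k^4/(a+1)_k^4$, pairing the upper and lower halves modulo $p$ so as to expose the common factor $\mu_p(a)$ and reduce everything to the scalar $Q_p(0)$; failing an elementary fold, the natural heavier tools are Dwork-type congruences for the parametric hypergeometric series, or a Gauss- and Jacobi-sum evaluation of the truncated ${}_4F_3$ in which the twist by $a$ contributes only a unit fourth power.

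The main obstacle is exactly this separation. Termwise the summands have degree $4(p-1)$ while $\mu_p$ has degree $2(p-1)$, so the argument must force a wholesale cancellation of the top $2(p-1)$ coefficients, not merely reproduce a value at one point; carrying the half-integer Pochhammer symbols through the reflection and checking that the surviving factor is genuinely $\mu_p$ with nonvanishing leading term (so the equivalence does not degenerate) is where the real work sits. I would also stress that the scalar appearing in the separation is the non-elementary quantity $b(p)$, so no purely combinatorial identity can suffice: Kilbourn's congruence is the indispensable input, and the parametric argument serves only to transport that one congruence to every coefficient of $Q_p$.
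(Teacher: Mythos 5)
Your reduction of the theorem to a single separation congruence $Q_p(a)\equiv b(p)\,\mu_p(a)\pmod p$ in $\mathbb F_p[a]$ is exactly the skeleton of the paper's argument: the paper proves that
$\sum_{k=0}^{p-1}(a+\tfrac12)_k^4/(a+1)_k^4\equiv b(p)/(a+1)_{(p-1)/2}^4\pmod p$, which after clearing denominators is your congruence with $\mu_p(a)\equiv(1-a)_{(p-1)/2}^4$ (the same polynomial, modulo $p$, as your $2^{2(p-1)}(a+\tfrac12)_{(p-1)/2}^4$), the scalar input being Beukers' mod~$p$ congruence (Kilbourn's mod~$p^3$ strengthening is more than is needed). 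Your evaluation at $a=0$, the nonvanishing of $\mu_p$ in $\mathbb F_p[a]$, and the remark that the top $2(p-1)$ coefficients of $Q_p$ must cancel wholesale are all correct.

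The gap is that you do not prove the separation congruence, and it is the entire content of the proof: everything else is bookkeeping. The paper obtains it by specialising the main identity of Krattenthaler and Zudilin (arXiv:2406.02954), an exact basic hypergeometric identity $F_n(a;\zeta)=\frac{n^2a^{n-1}}{(1+a+\dots+a^{n-1})^2}G_n(a;\zeta)F_n(1;\zeta)$ at a primitive $n$th root of unity, read as a congruence modulo $\Phi_n(q)$ and then degenerated via $a\mapsto q^a$, $q\to1$. None of the substitutes you list is likely to close the hole. The involution $k\mapsto p-1-k$ can at best pair terms of the sum with one another; it cannot by itself force a degree $4(p-1)$ polynomial to collapse to a scalar multiple of one specific degree $2(p-1)$ polynomial, since that requires a global cancellation across all $p$ summands rather than a termwise symmetry. (For the non-parametric sum the tail $k\ge(p+1)/2$ dies because $(\tfrac12)_k\equiv0\pmod p$ there; for generic $a$ no such truncation occurs, so the classical reflection tricks do not transfer.) A Dwork-type or Gauss/Jacobi-sum treatment of the $a$-deformed truncated ${}_4F_3$ would amount to re-deriving the Krattenthaler--Zudilin identity. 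As written, your proposal therefore establishes only the easy direction (all coefficients divisible by $p$ $\Rightarrow$ $p\mid b(p)$, via $a=0$); the converse rests on a correctly guessed but unproved, and genuinely hard, claim.
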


The newform \eqref{e01} happens to be the (unique) cusp eigenform of weight~4 on $\Gamma_0(8)$ and it has a certain historical significance.
Motivated by congruences arising from `formal group laws' \cite{St87,SB85}, F.~Beukers proved \cite{Be87} in 1987 a result for the Ap\'ery numbers, which can be equivalently stated as
\begin{equation}
\sum_{k=0}^{p-1}\frac{(\frac12)_k^4}{k!^4}\equiv b(p)\pmod{p}
\label{e02}
\end{equation}
for primes $p>2$.
Here and below the congruence $A\equiv B\pmod{p^\ell}$ for two \emph{rational} numbers is understood as $A-B\in p^\ell\mathbb Z_p$. When $b(p)\equiv0\pmod p$, congruence \eqref{e02} translates into $Q_p(0)\equiv0\pmod p$, thus demonstrating that the constant term of $Q_p(a)$ is divisible by~$p$.
Ten years later L.~Van Hamme \cite[Conjecture~(M.2)]{vH97}  observed numerically that
the congruence \eqref{e02} is valid modulo $p^3$. This conjecture was finally settled by T.~Kilbourn in~\cite{Ki06} built on an earlier work of S.~Ahlgren and K.~Ono in \cite{AO00} on the modularity of the Calabi--Yau threefold $\sum_{j=1}^4(x_j+x_j^{-1})=0$.
Furthermore, the ($p$-adic) congruence \eqref{e02} and its extensions possess an Archimedean counterpart
\begin{equation}
\sum_{k=0}^\infty\frac{(\frac12)_k^4}{k!^4}
=\frac{16L(f,2)}{\pi^2},
\label{e03}
\end{equation}
where $L(f,s)$ denotes the $L$-function of the modular form \eqref{e01}.
Equality \eqref{e03} was established independently in~\cite{RWZ15} and \cite[Section~7]{Za16}.

For the first two ordinary odd primes $p=11$ and $3137$ (the only ones up to $20\,000$), one can easily verify the divisibility offered in Theorem~\ref{th1}.
In fact, $b(11)=-44$ and $b(3137)=66\cdot3137$ are nonzero, and one may further suspect that $b(n)$~is never zero for odd $n$ viewing this as a baby version of Lehmer's question from \cite{Le43} about the nonvanishing of the Fourier coefficients of the modular invariant $\eta(\tau)^{24}=q\prod_{m=0}^\infty(1-q^m)^{24}$.

\begin{proof}[Proof of Theorem~\textup{\ref{th1}}]
For the proof%
\footnote{The role of $q$ within the proof is different from that outside it; this should not cause any confusion though.}
we recall the notation $(a;q)_k=\prod_{j=0}^{k-1}(1-aq^j)$ of the $q$-Pochhammer symbol and the identity, in fact only its special case $\ell_1=\ell_2=(n+1)/2$ with $n$~odd, proven in~\cite{KZ24}:
\begin{align}
F_n(a;\zeta)
&=\frac{n^2a^{n-1}}{(1+a+a^2+\dots+a^{n-1})^2}\,G_n(a;\zeta)F_n(1;\zeta)
\nonumber\\
&=\frac{a^{n-1}\prod_{j=1}^{n-1}(\zeta^j-1)^2}{\prod_{j=1}^{n-1}(a-\zeta^j)^2}\,G_n(a;\zeta)F_n(1;\zeta),
\label{e04}
\end{align}
where
\[
F_n(a;q)=\sum_{k=0}^{n-1}\frac{(aq^{(n+1)/2};q)_k^2(aq^{(-n+1)/2};q)_k^2}{(aq;q)_k^4}\,q^k,
\quad
G_n(a;q)=\prod_{j=1}^{(n-1)/2}\frac{(a-q^j)^2}{(1-aq^j)^2},
\]
and $\zeta=\zeta_n$ is any primitive $n$th root of unity.
The equality in \eqref{e04} translates into the congruence
\begin{equation}
F_n(a;q)\equiv\frac{a^{n-1}\prod_{j=1}^{n-1}(q^j-1)^2}{\prod_{j=1}^{(n-1)/2}(1-aq^j)^2\cdot\prod_{j=(n+1)/2}^{n-1}(a-q^j)^2}\,F_n(1;q)
\label{e05}
\end{equation}
modulo the cyclotomic polynomial $\Phi_n(q)$. When $n=p$ is prime, we have $\Phi_p(1)=p$;
therefore, substituting $q^a$ for~$a$ and passing to the limit as $q\to1$ in \eqref{e05} lead us to the congruence
\[
\sum_{k=0}^{p-1}\frac{(a+\frac{p+1}2)_k^2(a+\frac{-p+1}2)_k^2}{(a+1)_k^4}
\equiv\frac{(p-1)!^2}{(a+1)_{(p-1)/2}^4}
\cdot\sum_{k=0}^{p-1}\frac{(\frac{p+1}2)_k^2(\frac{-p+1}2)_k^2}{k!^4}\pmod p,
\]
hence
\begin{equation}
\sum_{k=0}^{p-1}\frac{(a+\frac12)_k^4}{(a+1)_k^4}
\equiv\frac1{(a+1)_{(p-1)/2}^4}\cdot\sum_{k=0}^{p-1}\frac{(\frac12)_k^4}{k!^4}
\equiv\frac{b(p)}{(a+1)_{(p-1)/2}^4}\pmod p
\label{e06}
\end{equation}
in view of \eqref{e02}.
It remains to clean up the denominator on the left-hand side in~\eqref{e05}.
\end{proof}

The above argument actually shows that modulo~$p$,
\[
Q_p(a)\equiv 2^{4(p-1)}(-a+1)_{(p-1)/2}^4\cdot b(p)
\equiv(-a+1)_{(p-1)/2}^4\cdot b(p)
\]
coefficient-wise, perhaps making the final stronger divisibility in Theorem~\ref{th1} less surprising.

\medskip
A uniform treatment of thirteen more cases
\[
\sum_{k=0}^{p-1}\frac{(s_1)_k(s_2)_k(1-s_1)_k(1-s_2)_k}{k!^4}\equiv b_{s_1,s_2}(p)\pmod{p^3}
\]
is given recently in \cite{LTYZ21} (see also \cite{Zu20} for a $q$-alternative of the arithmetic part\,---\,this paper motivated the discovery of the principal result in \cite{KZ24}), with the explicit identification of the weight~$4$ newforms $f_{s_1,s_2}(\tau)=\sum_{n=1}^\infty b_{s_1,s_2}(n)q^n$.
Different specialisations of \cite[Theorem~1]{KZ24} imply that
\[
(a+1)_{p-1}^4\cdot\sum_{k=0}^{p-1}\frac{(a+s_1)_k(a+s_2)_k(a+1-s_1)_k(a+1-s_2)_k}{(a+1)_k^4}\in p\mathbb Z_p[a]
\]
whenever a prime $p>5$ is non-ordinary for the corresponding modular form $f_{s_1,s_2}(\tau)$.
Though it indeed looks quite unlikely to get this strong divisibility for an infinite range of~$p$, one particular (CM!) example $f_{1/4,1/3}(\tau)=\eta(3\tau)^8=q\prod_{m=1}^\infty(1-q^{3m})^8$ clearly displays that $b_{1/4,1/3}(p)=0$ for primes $p\equiv2\pmod3$, so that the coefficients of the polynomials
\[
2^{4(p-1)}\,3^{2(p-1)}\,(a+1)_{p-1}^4\cdot\sum_{k=0}^{p-1}\frac{(a+\frac14)_k(a+\frac13)_k(a+\frac23)_k(a+\frac34)_k}{(a+1)_k^4}\in\mathbb Z[a]
\]
are always divisible by $p$ for such odd primes~$p$.
It may be interesting to have a different proof of the fact, to gain a better understanding of the CM phenomenon for newforms.

%==================================================

\end{document}